\date{}
\newcommand{\doublespacing}{\let\CS=\@currsize\renewcommand{\baselinestretch}{1.35}\tiny\CS}
\let\chapter\section
\def\NAT@def@citea{\def\@citea{\NAT@separator}}% Suppress spaces between citations using natbib.sty
\theoremstyle{plain}% Theorem-like structures provided by amsthm.sty
\newtheorem{theorem}{Theorem}[section]
\newtheorem{definition}{Definition}
\newtheorem{prop}{Proposition}
\newtheorem{ex}{Example}
\newtheorem{remark}{Remark}
\newtheorem{notation}{Notation}
\begin{document}
\markboth{P.Roy, G.Panda}
{Gradient based line search scheme for interval valued optimization problem}

%%%%%%%%%%%%%%%%%%%%% Publisher's Area please ignore %%%%%%%%%%%%%%%
%\catchline{}{}{}{}{}
%%%%%%%%%%%%%%%%%%%%%%%%%%%%%%%%%%%%%%%%%%%%%%%%%%%%%%%%%%%%%%%%%%%%

\title{On Critical Point for Functions with Bounded Parameters}

\author{Priyanka Roy$ ^\ddagger $  and Geetanjali Panda$^*$
%\footnote{Typeset names 
%in 8 pt roman, uppercase. Use the footnote to 
%indicate the present or permanent address of the author.}
\\
 Department of Mathematics, Indian Institute of Technology Kharagpur, \\
Kharagpur, West Bengal-721302, India.\\
$ ^\ddagger $ proy180192@maths.iitkgp.ac.in\\
$^*$ geetanjali@maths.iitkgp.ac.in}

\maketitle
%\subtitle{Do you have a subtitle?\\ If so, write it here}

%\titlerunning{Gradient based line search scheme for interval valued optimization problem}        % if too long for running head

%\section*{Gradient based line search scheme for interval valued optimization problem}
%\author{\hl{Priyanka Roy        \and
%        Geetanjali Panda
%}}
%
%%\authorrunning{Short form of author list} % if too long for running head
%
%\institute{\hl{P Roy  \and
%           G Panda \at
%              Department of Mathematics, Indian Institute of Technology, Kharagpur - 721302, India \\
%              \email{geetanjali@maths.iitkgp.ernet.in}
%}}
%
%\date{Received: date / Accepted: date}
%\maketitle

%\begin{abstract}
\begin{abstract}
Selection of descent direction at a point plays an important role in numerical optimization for minimizing a real valued function. In this article, a descent sequence is generated for the functions with bounded parameters to obtain a critical point. First, sufficient condition for the existence of descent direction is studied for this function and then  a set of descent directions at a point is determined using linear expansion. Using these results a descent sequence of intervals is generated and critical point is characterized. This theoretical development is justified with numerical example.\\ \\
\textbf{keywords}
Line search technique, Markov difference, Interval ordering, Descent direction, Critical point.  

\end{abstract}

\section{Introduction}
In most of the mathematical models, the parameters vary in some bounds which can be estimated from historical data. These uncertain parameters can be considered as intervals. In that case, the functions involved in the model here known as  functions with bounded parameters as intervals. Interval analysis plays an important role to handle these functions. This article has studied some properties of these type functions,
         $\breve{F}$ from $\mathbf{R}^n$ to the set of closed intervals, whose parameters are intervals. An example of such type function is \\ $\breve{F}(x,y)=e^{[-3,3]x^3}\oplus [4,5] xy^5$. In the literature of interval analysis, Markov is the  pioneer who has studied different areas of modern mathematics using interval analysis ( see \cite{dimitrova1992extended}, \cite{Kyurkchiev2016}, \cite{MARKOV1999225}, \cite{markov2000algebraic}, \cite{MARKOV200493}, \cite{markov2005quasilinear} etc. ). Markov has introduced nonstandard substraction between two closed intervals( known as Markov difference, $\ominus_M$),  which has explored calculus of functions with bounded parameters and its application in several areas in recent times (see  \cite{bhurjee2016optimality}, \cite{bhurjee2012efficient}, \cite{markov1979calculus}, \cite{MARKOV2006}, \cite{MARKOV2015} etc.). Another nonstandard difference, known as gH-difference,  in the set of intervals is introduced in \cite{stefanini2008generalization} and further developed in \cite{chalco2013calculus}, \cite{Lupulescu201350},\cite{stefanini2009generalized} etc. . It is justified in \cite{chalco2011generalized} that gH difference coincides with Markov difference in case of compact intervals. Markov difference is more comfortable for use in numerical computations. Hence in this article we have accepted Markov difference and proceed to develop an iterative process for generating a descent sequence of intervals. This concept may be extended further to develop a descent sequence of points which may converge to a local minimum point of a function with interval parameters under reasonable conditions. This can explore a new area of numerical optimization, which may be considered as the possible scope of the present contribution. At this present stage we focus on characterizing descent direction, generating descent sequence of intervals for a  function with interval parameters, which provides the critical point of the function. \par 
       Some prerequisites on interval analysis are discussed in Section 2. In Section 3, Markov difference is used to derive the linear expansion  of $\breve{F}$ and existence of descent direction of $\breve{F}$ at a point $x$ is studied. Section 4 is devoted for generating descent sequence of intervals which determines the critical point of $\breve{F}$. Section 5 provides concluding remarks with future scope.

\section{Prerequisites}
$ K(\textbf{R}) $ denotes the set of all compact intervals on $ \textbf{R} $ throughout this article . $ \breve{\alpha} \in K(\mathbf{R}) $ is the closed interval of the form $ [\underline{\alpha},\overline{\alpha}] $ where $ \underline{\alpha} \leq \overline{\alpha} $. For two points $\alpha_{1}$ and $ \alpha_{2} $,(not necessarily $\alpha_{1}\leq \alpha_{2}  $), $ \breve{\alpha} $ can be written as $  \breve{\alpha}=[\alpha_{1} \vee \alpha_{2}]$.  A real number $r$ can be represented as a degenerate interval denoted by $\breve{r}$ as $\breve{r}=[r,r] $~or $r.\breve{I}$, where $\breve{I}=[1,1]$. The null interval is $\breve{0}=[0,0]=0$.
\\ In $ K(\mathbf{R}) $, the norm $( \Vert .\Vert )$ of an interval $ \breve{\alpha} $ is defined as $ \Vert \breve{\alpha} \Vert= \max\left\lbrace | \underline{\alpha}|,| \overline{\alpha}|\right\rbrace  $ (\cite{markov1977extended} ) which is associated with the metric $ d(\breve{\alpha},0)=  \Vert \breve{\alpha}\Vert $ and $ d(\breve{\alpha}, \breve{\beta})=\max\left\lbrace | \underline{\alpha}-\underline{\beta} |,~| \overline{\alpha}-\overline{\beta}| \right\rbrace $.
 \\
 $K(\mathbf{R})$ is not a complete ordered set. Following interval ordering is used throughout the article..\\
  For $\breve{\alpha}$, $\breve{\beta}\in K(\mathbf{R})$, $ \breve{\alpha}\preceq \breve{\beta}\Leftrightarrow \underline{\alpha} \leq \underline{\beta}$, $\overline{\alpha}\leq \overline{\beta} $ and $\breve{\alpha}\neq \breve{\beta}$;
 $\breve{\alpha}\prec \breve{\beta} \Leftrightarrow \underline{\alpha} < \underline{\beta}$ and $\overline{\alpha}< \overline{\beta}$.

The other interval order relations $`\succeq '$ and $ '\succ' $ can be defined in a similar way.
\\
%   Algebraic  operation between two intervals  $ \breve{\alpha} $, $ \breve{\beta} $  is defined as $  \breve{\alpha} \circledast \breve{\beta}=\left\lbrace a* b | a\in \breve{\alpha},b\in \breve{\beta}  \right\rbrace,$ where $ * \in \left\lbrace +,-,\cdot,/ \right\rbrace  $. $ K(\mathbf{R}) $ is closed with respect to these operations.

 Additive inverse in $ \left\langle K(\mathbf{R}),\oplus,\odot\right\rangle  $ may not exist, that is, $ \breve{\alpha}\ominus \breve{\alpha} $ is not necessarily $\breve{0}$ according to this approach.
 The non-standard subtraction due to \cite{markov1979calculus}, denoted by $ \ominus_{M} $, provides additive inverse, which is
 \begin{align}\label{difference}
 \breve{\alpha}\ominus_{M}\breve{\beta}=\left[\min\left\lbrace \underline{\alpha}-\underline{\beta},\overline{\alpha}-\overline{\beta}\right\rbrace,\max\left\lbrace \underline{\alpha}-\underline{\beta},\overline{\alpha}-\overline{\beta}\right\rbrace\right]
 \end{align}
 Following properties of $ \ominus_{M} $ due to \cite{dimitrova1992extended} and \cite{markov1979calculus} are used throughout the article.\\
 \textbf{(i) } $ \breve{\alpha}\ominus_{M} \breve{\alpha}= \breve{0} $ ; \textbf{(ii) } $\ominus_{M}\breve{\alpha}=(-1)\breve{\alpha}=[-\overline{\alpha},-\underline{\alpha}] $

%In $ K(\mathbf{R}) $, the norm $( \parallel . \parallel )$ of an interval $ \breve{u} $ is defined as $ \parallel \breve{u} \parallel= \max\left\lbrace \mid \underline{u}\mid,\mid \overline{u}\mid \right\rbrace  $ (\cite{markov1977extended} )and the metric is defined as $ d(\breve{u},0)=  \parallel \breve{u} \parallel $ and $ d(\breve{u}, \breve{v})=\max\left\lbrace \mid \underline{u}-\underline{v} \mid,\mid \overline{u}-\overline{v}\mid \right\rbrace $.
 %\\
% $K(\mathbf{R})$ is not a totally ordered set. Several partial orderings exist in the literature of interval analysis(see  \cite{moore2009introduction}).  Following interval ordering is often used for solving interval optimization problem.\\
%  For $\breve{u}=[\underline{u},\overline{u}]$, $\breve{v}=[\underline{v}$, $\overline{v}]\in K(\mathbf{R})$,
%$ \breve{u}\underset{=}\prec \breve{v} \Leftrightarrow \underline{u} \leq \underline{v}$ and $\overline{u}\leq \overline{v} $;
%$ \breve{u}\preceq \breve{v}\Leftrightarrow \breve{u}\underset{=}\prec \breve{v}$ and $\breve{u}\neq \breve{v}$;
% $\breve{u}\prec \breve{v} \Leftrightarrow \underline{u} < \underline{v}$ and $\overline{u}< \overline{v}$.
%
%The interval order relations  $`\underset{=}\succ '$ $`\succeq '$ and $ '\succ' $ are defined in a similar way by reverting the inequalities.

Limit and continuity  of $ \breve{F} $ are understood in the sense of $ \parallel . \parallel$ due to \cite{markov1979calculus}.
Following results due to \cite{markov1979calculus} are summarized for $ \breve{F}:\mathbf{R}\rightarrow K(\mathbf{R}) $,  $ \breve{F}(x)=[\underline{F}(x),\overline{F}(x)] $, where $ \underline{F},\overline{F}:\mathbf{R}\rightarrow \mathbf{R} $, $ \underline{F}(x)\leq \overline{F}(x) $ $ \forall~~x\in \mathbf{R} $.\\
\begin{definition}[Definition 2, \cite{markov1979calculus}]
\label{Definition 2}
 $ \breve{F}:\mathbf{R}\rightarrow K(\mathbf{R}) $ is differentiable at $  x_0$ if $\displaystyle \lim_{h\rightarrow 0}\frac{\breve{F}(x_0+h) \ominus_{M} \breve{F}(x_0)}{h} $ exists. The limiting value is the derivative of $ \breve{F} $ at $x_0$, denoted by $ \breve{F}^{\prime}(x_0) $.\\
Alternatively if $ \exists \breve{F}^{\prime}(x_0) \in K(\mathbf{R}) $ and an error function \\ $ \breve{E}_{x_0}:\mathbf{R} \rightarrow K(\mathbf{R}) $ at $ x $ such that
\begin{align}
\breve{F}(x_0+h)\ominus_{M} \breve{F}(x_0)= h\odot \breve{F}^{\prime}(x_0)\oplus \breve{E}_{x_0}(h) \label{df-1} \\ 
\nonumber 
\text{ where } \lim_{h \rightarrow 0} \breve{E}_{x_0}(h)=\breve{0} 
\end{align}
\end{definition}

\begin{theorem}[Theorem 9, \cite{markov1979calculus}]
\label{Mean-Value theorem for interval functions}
If $ \breve{F}:\mathbf{R}\rightarrow I( \mathbf{R})$ is \\ continuous in $ \Delta$, where $ \Delta=\left[p,q \right]$ and differentiable in $(p,q ) $,then $ \breve{F}(q)\ominus_{M} \breve{F}(p)\subset \breve{F}^{\prime}(\Delta)(q-p) $, where\\ $ \displaystyle \breve{F}^{\prime}(\Delta)=\cup_{\xi \in \Delta}\breve{F}^{\prime}(\xi)$.
\end{theorem}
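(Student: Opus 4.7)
The plan is to reduce this interval-valued mean value theorem to the classical scalar mean value theorem applied separately to the endpoint functions $\underline{F}$ and $\overline{F}$, after first identifying how the Markov derivative $\breve{F}'(\xi)$ relates to $\underline{F}'(\xi)$ and $\overline{F}'(\xi)$.

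First, I would establish the identification $\breve{F}'(\xi)=[\min\{\underline{F}'(\xi),\overline{F}'(\xi)\},\max\{\underline{F}'(\xi),\overline{F}'(\xi)\}]$. Applying the formula \eqref{difference} for $\ominus_M$ to $\breve{F}(\xi+h)\ominus_M\breve{F}(\xi)$ and dividing by $h$ (treating the two signs of $h$ carefully, since multiplication by a negative scalar swaps endpoints of an interval), the limit in Definition~\ref{Definition 2} exists if and only if both real limits $\underline{F}'(\xi)=\lim_{h\to 0}(\underline{F}(\xi+h)-\underline{F}(\xi))/h$ and $\overline{F}'(\xi)=\lim_{h\to 0}(\overline{F}(\xi+h)-\overline{F}(\xi))/h$ exist, and in that case the Markov derivative has the form above. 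In particular, $\underline{F}'(\xi),\overline{F}'(\xi)\in\breve{F}'(\xi)$ for every $\xi\in(p,q)$.

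Second, continuity of $\breve{F}$ on $\Delta$ in the norm $\|\cdot\|$ forces continuity of both $\underline{F}$ and $\overline{F}$ on $\Delta$, and the previous step gives differentiability on $(p,q)$. The classical mean value theorem then yields $\xi_1,\xi_2\in(p,q)$ with $\underline{F}(q)-\underline{F}(p)=\underline{F}'(\xi_1)(q-p)$ and $\overline{F}(q)-\overline{F}(p)=\overline{F}'(\xi_2)(q-p)$. By \eqref{difference}, $\breve{F}(q)\ominus_M\breve{F}(p)$ is the interval whose endpoints are these two numbers (sorted by $\min/\max$). Both $\underline{F}'(\xi_1)$ and $\overline{F}'(\xi_2)$ lie in $\breve{F}'(\Delta)=\cup_{\xi\in\Delta}\breve{F}'(\xi)$, so after multiplication by $(q-p)$ the two endpoints of $\breve{F}(q)\ominus_M\breve{F}(p)$ belong to $\breve{F}'(\Delta)(q-p)$.

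The final and most delicate step is upgrading endpoint membership to containment of the whole interval. For this I would invoke the Darboux intermediate-value property of real derivatives: the sets $\{\underline{F}'(\xi):\xi\in\Delta\}$ and $\{\overline{F}'(\xi):\xi\in\Delta\}$ are intervals, so $\breve{F}'(\Delta)$, being the union over $\xi$ of the intervals spanned by these two values, is itself a single interval, and therefore $\breve{F}'(\Delta)(q-p)$ is an interval containing the two endpoints identified above. The desired inclusion $\breve{F}(q)\ominus_M\breve{F}(p)\subset\breve{F}'(\Delta)(q-p)$ follows. The main obstacle I anticipate is the derivative-identification step, because the sign of $h$ interacts non-trivially with the $\min/\max$ in \eqref{difference}; once that is handled cleanly, the rest is a direct reduction to the scalar case together with the Darboux observation.
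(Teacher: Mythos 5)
A preliminary remark: the paper does not prove this statement at all — it is imported verbatim as Theorem~9 of \cite{markov1979calculus} and used as a prerequisite — so there is no in-paper argument to compare yours against; I can only assess your proposal on its own terms.

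Your overall strategy (reduce to the scalar mean value theorem for $\underline{F}$ and $\overline{F}$, then use the Darboux property to see that $\breve{F}^{\prime}(\Delta)$ is connected, so that containing the two endpoints of $\breve{F}(q)\ominus_{M}\breve{F}(p)$ forces containment of the whole interval) is the natural one, and your second and third steps are sound as written. The genuine gap is in the first step: the claimed equivalence ``the limit in Definition~\ref{Definition 2} exists if and only if both $\underline{F}^{\prime}(\xi)$ and $\overline{F}^{\prime}(\xi)$ exist'' is false. Markov differentiability only forces the \emph{unordered pair} of difference quotients $\bigl(\underline{F}(\xi+h)-\underline{F}(\xi)\bigr)/h$ and $\bigl(\overline{F}(\xi+h)-\overline{F}(\xi)\bigr)/h$ to converge via \eqref{difference}, and the two quotients may swap roles as $h$ varies. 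Concretely, $\breve{F}(x)=[-|x|,\,|x|]$ satisfies $\breve{F}(h)\ominus_{M}\breve{F}(0)=[-|h|,|h|]$, so the difference quotient is identically $[-1,1]$ for every $h\neq 0$ and $\breve{F}^{\prime}(0)=[-1,1]$ exists, yet neither endpoint function is differentiable at $0$. For such an $\breve{F}$ your second step cannot be executed: the classical mean value theorem does not apply to $\underline{F}$ or $\overline{F}$ on an interval containing $0$, even though the conclusion of the theorem still holds there (here $\breve{F}(1)\ominus_{M}\breve{F}(-1)=\breve{0}\subset[-2,2]$). So, as a proof of the theorem under its stated hypothesis of Markov differentiability alone, the argument breaks precisely at the step you yourself flagged as delicate, though for a different reason than the sign of $h$.

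The gap is confined to that one implication. If the hypothesis is strengthened to require $\underline{F}$ and $\overline{F}$ to be differentiable on $(p,q)$ — which is exactly the setting the rest of the paper implicitly works in, cf.\ Notation~\ref{not}, where $\breve{g}_{i}$ is assembled from $\partial\underline{F}/\partial x_{i}$ and $\partial\overline{F}/\partial x_{i}$ — then your identification $\breve{F}^{\prime}(\xi)=\bigl[\min\{\underline{F}^{\prime}(\xi),\overline{F}^{\prime}(\xi)\},\max\{\underline{F}^{\prime}(\xi),\overline{F}^{\prime}(\xi)\}\bigr]$ is correct, your connectedness argument for $\breve{F}^{\prime}(\Delta)$ goes through (each $\breve{F}^{\prime}(\xi)$ meets the Darboux interval $\underline{F}^{\prime}(\Delta)$, so the union is connected), and the proof is complete. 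To prove the theorem as literally stated you would instead need a separate treatment of the ``switching'' points where only the interval derivative, and not the endpoint derivatives, exists; note that even one-sided derivatives of $\underline{F}$ and $\overline{F}$ need not exist there, so this case cannot be dismissed by a routine one-sided refinement of your argument.
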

% \vspace{0.25cm}
 
Above results  discuss the calculus of interval function on $\mathbf{R}$. In next section some of these results are extended to develop calculus of interval functions on $ \mathbf{R}^{n} $.

\section{Descent direction for interval function over $ \mathbf{R}^n $}
%\begin{definition}
%$ \lim_{x\rightarrow x_{0}} \breve{F}(x)=\breve{F}_{0} $ exists if for arbitrarily chosen $ \epsilon >0 $, $ \exists~~ \delta>0 $ such that $ d(\breve{F}(x),\breve{F}_{0})<\epsilon $ for any $ x\in \mathbf{R}^{n} $ satisfying $ 0<\parallel x-x_{0}\parallel<\delta $.
%\end{definition}
%
%\begin{definition}
%\label{Continuity}
%$ \breve{F}:\mathbf{R}^{n}\rightarrow K(\mathbf{R}) $ is called continuous at $ \overline{x} \in \mathbf{R}^{n} $ if for arbitrarily chosen $ \epsilon > 0 ,\exists \delta > 0$ such that $ d(\breve{F}(x),\breve{F}(\overline{x}))< \epsilon $ for any $x\in \mathbf{R}^{n}$ satisfying $ \parallel x-\overline{x}\parallel<\delta $.
%\end{definition}
%Following result follows directly from this definition.
%\begin{prop}
%\label{pro}
%  $ \breve{F} $ is continuous at a point iff $ \underline{F} $ and $ \overline{F} $ are continuous at that point.\end{prop}
\begin{definition}
\label{Partial derivatives}
For $ \breve{F}:\mathbf{R}^{n}\rightarrow K(\mathbf{R}) $, the partial derivative \\ of $ \breve{F} $ with respect to $ x_{i} $ at `$  x $ ' exists   if $ \underset{h_i\rightarrow 0}\lim \dfrac {\breve{F}(x_1, \cdots, x_{i-1}, x_{i}+h_i, x_{i+1}, \cdots x_{n})\ominus_{M}\breve{F}(x)}{h_i} $ exists. The limiting value is denoted by $ \frac{\partial \breve{F}(x)}{\partial x_{i}} $.
\end{definition}
%\vspace{0.25cm}
In the light of concept of differentiability in \eqref{df-1} of Definition \ref{Definition 2}, the following can be stated as follows.
%\vspace{0.15cm}
\begin{definition}
\label{Differentiability}
 $ \breve{F}:\mathbf{R}^{n}\rightarrow K(\mathbf{R}) $  is called differentiable at $x_0$ if $ \frac{\partial \breve{F}(x_0)}{\partial x_i} $ exists $ \forall~i $ and an interval function \\ $ \breve{E}_{x_0}(h):\mathbf{R}^{n}\rightarrow K(\mathbf{R}) $ such that
$$ \breve{F}(x_0+h)\ominus_{M} \breve{F}(x_0)=\sum\limits_{i=1}^{n}\left( \oplus h_{i}\odot \frac{\partial \breve{F}(x_0)}{\partial x_{i} }\right) \oplus \|h\| \odot  \breve{E}_{x_0}(h)  $$
for $ \|h\| < \delta $  for some $ \delta>0 $, where $\lim_{\|h\|\rightarrow 0} \breve{E}_{x_0}(h)= \breve{0}$.
\end{definition}
%\vspace{0.25cm}
Gradient of $ \breve{F} $ at $ x $ is  $(\frac{\partial \breve{F}(x)}{\partial x_{1}},\frac{\partial \breve{F}(x)}{\partial x_{2}},\cdots,\frac{\partial \breve{F}(x)}{\partial x_{n}})^{\prime}$ and denoted by $ \nabla \breve{F} (x)$.\\

Following result is about linear expansion of $ \breve{F} $ in inclusion form which will be used further to derive descent direction.
\begin{theorem}\label{mean value theorem}
Let $\Omega$ be an open convex subset  of $ \mathbf{R}^{n} $ and $ \breve{F}:\Omega \rightarrow K(\mathbf{R}) $ be differentiable  on $\Omega$. Then for any $ u,~v \in \Omega $,
\begin{align}
\breve{F}(v) \ominus_{M} \breve{F}(u)\subset \underset{c \in L.S\lbrace u,v \rbrace}\cup \sum_{i=1}^{n}(v_i-u_i)\odot\frac{\partial \breve{F}(c)}{\partial \gamma_i} \label{mean}
\end{align}
where $ \gamma(t)= u+t (v-u)$, $ t \in [0,1] $ and $ L.S\lbrace u,v \rbrace  $ denotes the line segment joining $ u $ and $ v $.
\end{theorem}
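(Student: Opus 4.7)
The plan is to reduce the $n$-dimensional statement to the one-dimensional Mean Value Theorem (Theorem \ref{Mean-Value theorem for interval functions}) by restricting $\breve{F}$ to the line segment joining $u$ and $v$. Define $\breve{G}:[0,1]\to K(\mathbf{R})$ by $\breve{G}(t)=\breve{F}(\gamma(t))$ with $\gamma(t)=u+t(v-u)$. Since $\Omega$ is open and convex, $\gamma(t)\in\Omega$ for every $t\in[0,1]$, so $\breve{G}$ is well defined. Once I know $\breve{G}$ is continuous on $[0,1]$ and differentiable on $(0,1)$ with
$$\breve{G}'(t)=\sum_{i=1}^{n}(v_i-u_i)\odot\frac{\partial \breve{F}(\gamma(t))}{\partial x_i},$$
Theorem \ref{Mean-Value theorem for interval functions} applied to $\breve{G}$ on $[0,1]$ delivers the conclusion directly, since $\breve{G}(1)=\breve{F}(v)$, $\breve{G}(0)=\breve{F}(u)$ and $\{\gamma(t):t\in[0,1]\}$ is exactly $L.S\{u,v\}$.

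The core computation is the derivative of $\breve{G}$. Applying Definition \ref{Differentiability} to $\breve{F}$ at the interior point $\gamma(t)$ with increment $h=(v-u)\Delta t$, which lies in $\Omega$ for sufficiently small $|\Delta t|$ by openness, one obtains
$$\breve{G}(t+\Delta t)\ominus_M\breve{G}(t)=\sum_{i=1}^{n}\bigl(\oplus(v_i-u_i)\Delta t\odot\tfrac{\partial \breve{F}(\gamma(t))}{\partial x_i}\bigr)\oplus\|(v-u)\Delta t\|\odot\breve{E}_{\gamma(t)}((v-u)\Delta t).$$
Dividing by $\Delta t$ and letting $\Delta t\to 0$, the first summand collapses to $\sum_i (v_i-u_i)\odot\partial\breve{F}(\gamma(t))/\partial x_i$, while the error contribution has norm bounded by $\|v-u\|\cdot\|\breve{E}_{\gamma(t)}((v-u)\Delta t)\|$ and hence vanishes because $\lim_{\|h\|\to 0}\breve{E}_{\gamma(t)}(h)=\breve{0}$. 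Continuity of $\breve{G}$ on $[0,1]$ is obtained from the same expansion by letting $\Delta t \to 0$ without dividing.

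Having established the derivative formula, Theorem \ref{Mean-Value theorem for interval functions} for $\breve{G}$ on $[0,1]$ gives
$$\breve{F}(v)\ominus_M\breve{F}(u)=\breve{G}(1)\ominus_M\breve{G}(0)\subset\bigcup_{t\in[0,1]}\breve{G}'(t)\cdot(1-0)=\bigcup_{c\in L.S\{u,v\}}\sum_{i=1}^{n}(v_i-u_i)\odot\frac{\partial\breve{F}(c)}{\partial x_i},$$
which is precisely \eqref{mean} with the directional derivative along the segment written as $\partial\breve{F}(c)/\partial\gamma_i$.

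The step I expect to be the main obstacle is justifying the derivative formula for $\breve{G}$, because scalar multiplication $\alpha\odot\breve{A}$ in $K(\mathbf{R})$ is not linear in $\alpha$ across the sign change at zero: for $\Delta t<0$ the interval $(v_i-u_i)\Delta t\odot\partial\breve{F}(\gamma(t))/\partial x_i$ has its endpoints swapped relative to the $\Delta t>0$ case, and the Markov difference $\ominus_M$ in the numerator must be handled in a compatible way. I would therefore split the limit into $\Delta t\to 0^+$ and $\Delta t\to 0^-$ separately and use the identities $\ominus_M\breve{A}=(-1)\odot\breve{A}$ and $\breve{A}\ominus_M\breve{A}=\breve{0}$ stated in the prerequisites, or equivalently unpack both sides endpointwise via the explicit formula \eqref{difference}. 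The remaining steps, namely interchanging the finite Minkowski $\oplus$-sum with the union over $t$ and identifying $\{\gamma(t):t\in[0,1]\}$ with $L.S\{u,v\}$, are routine.
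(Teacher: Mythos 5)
Your proposal is correct and follows essentially the same route as the paper: restrict $\breve{F}$ to the segment via $\gamma$, compute the derivative of the composite function from Definition \ref{Differentiability}, and apply Theorem \ref{Mean-Value theorem for interval functions} on $[0,1]$. The only difference is that you spell out the chain-rule computation and the sign issue for $\Delta t<0$, which the paper compresses into ``it can be shown that their composite function $\breve{\phi}$ is differentiable.''
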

\begin{proof} Since $ \Omega $ is convex subset of $ \mathbf{R}^{n} $, for $ u, v \in \Omega $, $ u+t(v-u) $ with $ t \in [0,1] $  must belongs to $ \Omega $.\\
Let $\breve{\phi}:[0,1] \rightarrow K(\mathbf{R})$ is defined by \\ $ \breve{\phi}(t)= \breve{F}( \gamma_1(t), \gamma_2(t), \cdots \gamma_n(t)) $.  Since $\breve{F}(\gamma)$ and $ \gamma(t) $ are differentiable, using Definition \ref{Differentiability} and first order Taylor expansion of  $ \gamma $ it can be shown that their composite function $ \breve{\phi} $ is differentiable and 
\\ $ \breve{\phi}^{\prime}(t)= \displaystyle \sum_{i=1}^{n} \gamma_i^{\prime}(t) \odot \frac{\partial \breve{F} (\gamma(t))}{\partial \gamma_{i}}= \displaystyle \sum_{i=1}^{n}(v_i-u_i)\odot \frac{\partial \breve{F}(\gamma(t))}{\partial \gamma_{i}}  $.\\\\
%$ \breve{\phi} $ is differentiable on $[0,1]  $.
%From Theorem \ref{Theorem 8}, $ \breve{\phi}^{\prime}(t)= (b-a)^{T}\nabla \breve{F}(\gamma (t)) $
 From Theorem \ref{Mean-Value theorem for interval functions},
$
\breve{\phi}(1)\ominus_{M} \breve{\phi}(0)\subset \underset{\theta \in [0,1]}\cup \breve{\phi}^{\prime}(\theta)$.
 Here $ \breve{\phi}(1)=\breve{F}(v) $ and $ \breve{\phi}(0)= \breve{F}(u) $. Hence \eqref{mean} follows, where $ c= a+\theta(b-a) $ for some $\theta$ and  $ \breve{\phi}^{\prime}(\theta)= \displaystyle\sum_{i=1}^{n}(v_i-u_i)\odot\frac{\partial \breve{F}(c)}{\partial \gamma_i} $. 
  \end{proof}
\begin{prop}\label{lm}
Let $ \breve{\alpha},\breve{\beta}\in K(\mathbf{R}) $. Then $ \breve{\alpha}\prec \breve{\beta}$ holds if and only if $ \breve{\alpha}\ominus_{M}\breve{\beta}\prec \breve{0} $.
\end{prop}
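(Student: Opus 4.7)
The proof is essentially an unpacking of the definitions, so the plan is to reduce both sides of the biconditional to a concrete pair of scalar inequalities and observe that the two conditions coincide.

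First, I would recall from equation \eqref{difference} that
\[
\breve{\alpha}\ominus_M\breve{\beta}=\left[\min\{\underline{\alpha}-\underline{\beta},\,\overline{\alpha}-\overline{\beta}\},\ \max\{\underline{\alpha}-\underline{\beta},\,\overline{\alpha}-\overline{\beta}\}\right],
\]
so the interval $\breve{\alpha}\ominus_M\breve{\beta}$ has lower endpoint $\min\{\underline{\alpha}-\underline{\beta},\,\overline{\alpha}-\overline{\beta}\}$ and upper endpoint $\max\{\underline{\alpha}-\underline{\beta},\,\overline{\alpha}-\overline{\beta}\}$. Then by the definition of the ordering $\prec$ given in Section~2 (applied against $\breve{0}=[0,0]$), we have $\breve{\alpha}\ominus_M\breve{\beta}\prec\breve{0}$ if and only if both these endpoints are strictly negative.

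Next I would use the elementary fact that for two real numbers $a$ and $b$, one has $\max\{a,b\}<0$ if and only if both $a<0$ and $b<0$; the analogous statement with $\min$ is then automatic. Setting $a=\underline{\alpha}-\underline{\beta}$ and $b=\overline{\alpha}-\overline{\beta}$, this shows that $\breve{\alpha}\ominus_M\breve{\beta}\prec\breve{0}$ is equivalent to the two scalar inequalities $\underline{\alpha}<\underline{\beta}$ and $\overline{\alpha}<\overline{\beta}$ holding simultaneously. But by definition this is exactly $\breve{\alpha}\prec\breve{\beta}$, which closes the chain of equivalences in both directions at once.

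There is really no obstacle here, since the argument is a direct reading of the definitions; the only mild care needed is in handling the $\min$/$\max$ inside \eqref{difference} to see that ``both endpoints negative'' coincides with ``both coordinate-wise differences negative'', after which no further manipulation is required.
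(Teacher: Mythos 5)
Your proposal is correct and is precisely the ``straightforward from the definition of $\ominus_{M}$'' argument that the paper itself invokes without writing out: unpacking \eqref{difference}, comparing endpoints with $\breve{0}=[0,0]$ via the definition of $\prec$, and using that $\max\{a,b\}<0$ iff $a<0$ and $b<0$. No differences in approach; your write-up simply makes the omitted details explicit.
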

%\vspace{0.15cm}
Proof of this result is straight forward from the definition of $\ominus_{M}$.

\begin{definition}
\label{descent direction}
$ d\in \mathbf{R}^{n} $ is called a descent direction of $ \breve{F} $ at $ x $ if $ \exists $ some $ \delta>0 $ so that $ \breve{F}(x+\alpha d)\prec \breve{F}(x)~~\forall \alpha\in (0,\delta)$.
\end{definition}
%\vspace{0.25cm}
\begin{notation} 
\label{not} 
Denote $\nabla\breve{F}(x)$ by $\breve{g}(x)$ where \\ $\breve{g}(x)=\begin{pmatrix} \breve{g}_{1}(x) & \breve{g}_{2}(x) & \cdots &  \breve{g}_{n}(x)\end{pmatrix}^{\prime}$, with \\ $\breve{g}_{i}(x)=[\underline{g}_{i}(x), \overline{g}_{i}(x)]~\forall i= 1,2, \cdots, n $ where \\ $\underline{g}_{i}(x)=\min \lbrace  \frac{\partial \underline{F}(x)}{\partial x_{i}}, \frac{\partial \overline{F}(x)}{\partial x_{i}}\rbrace,~ {\overline{g}_{i}(x)=\max \lbrace \frac{\partial \overline{F}(x)}{\partial x_{i}}, \frac{\partial \underline{F}(x)}{\partial x_{i}} \rbrace}$.\\
Denote $\underline{g}(x)\triangleq \begin{pmatrix} \underline{g}_{1}(x)& \underline{g}_{2}(x),& \cdots,& \underline{g}_{n}(x) \end{pmatrix}^{\prime}$ and \\
 $ \overline{g}(x)\triangleq \begin{pmatrix} \overline{g}_{1}(x)& \overline{g}_{2}(x),& \cdots, &\overline{g}_{n}(x)
\end{pmatrix}^{\prime}  $.
%\underline{g}_{1}(x), \underline{g}_{2}(x),...,\underline{g}_{n}(x))^{\prime},~~
%\overline{g}(x)\triangleq (\overline{g}_{1}(x), \overline{g}_{2}(x),...,\overline{g}_{n}(x))^{\prime}
\end{notation}
%\vspace{0.25cm}
\begin{theorem}\label{desdir}
Let $ \breve{F}:\Omega\subseteq \mathbf{R}^{n}\rightarrow K(\mathbf{R}) $ be continuously differentiable. If $ \displaystyle \sum_{i=1}^n d_i \odot \breve{g}_{i}(x)\prec \breve{0} $, then $d$ is a descent direction of $ \breve{F} $ at $x$.
\end{theorem}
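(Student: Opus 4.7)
The plan is to reduce the statement, via Proposition~\ref{lm}, to showing that $\breve{F}(x+\alpha d)\ominus_M \breve{F}(x)\prec \breve{0}$ for all sufficiently small $\alpha>0$, and then to extract this strict inequality from the linear expansion in Definition~\ref{Differentiability} together with the hypothesis $\sum_{i=1}^n d_i\odot \breve{g}_i(x)\prec \breve{0}$.

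First I would substitute $h=\alpha d$ in the differentiability formula of Definition~\ref{Differentiability}. For $\alpha>0$ sufficiently small this yields
\[
\breve{F}(x+\alpha d)\ominus_M \breve{F}(x) \;=\; \sum_{i=1}^n \bigl(\oplus\,\alpha d_i\odot \breve{g}_i(x)\bigr)\,\oplus\, \alpha\|d\|\odot \breve{E}_x(\alpha d),
\]
using $\|\alpha d\|=\alpha\|d\|$. Pulling out $\alpha$ from the first sum (a legitimate scalar operation on intervals), the right-hand side equals $\alpha\odot\bigl[\sum_{i=1}^n d_i\odot \breve{g}_i(x)\,\oplus\, \|d\|\odot \breve{E}_x(\alpha d)\bigr]$. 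Since multiplication by a positive real preserves the ordering $\prec$, it suffices to show that the bracketed expression is $\prec \breve{0}$ for all small $\alpha$.

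Next I would write $\sum_{i=1}^n d_i\odot \breve{g}_i(x)=[a,b]$ with $a\le b<0$, where both endpoints are strictly negative by the hypothesis and Proposition~\ref{lm} style reasoning. Set $\eta = \tfrac{1}{2}\min\{|a|,|b|\}>0$. Because $\lim_{\|h\|\to 0}\breve{E}_x(h)=\breve{0}$ in the norm $\|\cdot\|$, there exists $\delta>0$ such that for $0<\alpha<\delta$ we have $\|\breve{E}_x(\alpha d)\|<\eta/\|d\|$ (assuming $d\neq 0$; the case $d=0$ is vacuous since then $\sum d_i\odot \breve{g}_i(x)=\breve{0}$, contradicting the hypothesis). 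This forces both endpoints of $\|d\|\odot\breve{E}_x(\alpha d)$ to lie in $(-\eta,\eta)$, so when added to $[a,b]$ the resulting interval still has both endpoints strictly negative, i.e.\ $\prec \breve{0}$.

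Combining the last two paragraphs gives $\breve{F}(x+\alpha d)\ominus_M \breve{F}(x)\prec \breve{0}$ for all $\alpha\in(0,\delta)$, and one concludes $\breve{F}(x+\alpha d)\prec \breve{F}(x)$ via Proposition~\ref{lm}. The only subtle point is the bookkeeping in the third step: verifying that adding a Minkowski-small interval to a strictly negative interval preserves strict negativity in the component-wise sense required by the definition of $\prec$. Once one writes the endpoints explicitly, this is immediate from the triangle-type bound $\|\cdot\|=\max\{|\underline{\cdot}|,|\overline{\cdot}|\}$, so no deeper obstacle arises.
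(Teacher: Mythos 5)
Your proof is correct, but it follows a genuinely different route from the paper's. You work directly from the first-order expansion in Definition~\ref{Differentiability}, substituting $h=\alpha d$, factoring the positive scalar $\alpha$ out of the Minkowski sum, and then absorbing the error term $\|d\|\odot\breve{E}_x(\alpha d)$ into the strictly negative interval $\sum_i d_i\odot\breve{g}_i(x)=[a,b]$ via an $\eta$--$\delta$ estimate on its norm; your endpoint bookkeeping in the last step is sound, and your dismissal of the case $d=0$ is correct since $\breve{0}\prec\breve{0}$ fails. The paper instead invokes continuity of $\breve{g}$ to get $\sum_i d_i\odot\breve{g}_i(x+\alpha d)\prec\breve{0}$ on a whole segment and then applies the mean value inclusion of Theorem~\ref{mean value theorem}, so that $\breve{F}(x+\alpha d)\ominus_M\breve{F}(x)$ sits inside a union of strictly negative intervals; no error term ever appears. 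The trade-off: the paper's argument genuinely uses the ``continuously differentiable'' hypothesis (continuity of $\breve{g}$ near $x$) but avoids any explicit estimate, whereas your argument needs only differentiability at the single point $x$ and so proves a slightly stronger statement, at the cost of having to justify the scalar-distributivity step $\alpha(\breve{\alpha}\oplus\breve{\beta})=\alpha\breve{\alpha}\oplus\alpha\breve{\beta}$ and the perturbation estimate explicitly. Both conclude with Proposition~\ref{lm} in the same way.
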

\begin{proof}
$\displaystyle \sum_{i=1}^n d_i \odot \breve{g}_{i}(x)$ is continuous since $  \breve{g} $ is continuous. Since  $ \displaystyle \sum_{i=1}^n d_i \odot \breve{g}_{i}(x)\prec \breve{0} $, so  $ \displaystyle 0 \notin  \sum_{i=1}^n d_i \odot \breve{g}_{i}(x) $. Therefore $ \exists~~ \delta>0 $ such that
\begin{equation}\label{ee}
 \sum_{i=1}^n d_i \odot \breve{g}_{i}(x+ \alpha d)\prec \breve{0} ~~\forall \alpha\in (0,\delta).
\end{equation}
Using  Theorem \ref{mean value theorem},
\begin{align}
\breve{F}(x+\alpha d)\ominus_{M} \breve{F}(x)\subset \underset{c\in~ L.S \lbrace x,x+\alpha d\rbrace } \cup \alpha \sum_{i=1}^n d_i \odot \breve{g}_{i}(c)\label{eq 16}
\end{align}
From  \eqref{ee}, $ \displaystyle \sum_{i=1}^n d_i \odot \breve{g}_{i}(c)\prec \breve{0} $, for each $ c \in L.S\left \lbrace x,x+\alpha d \right\rbrace  $. Hence from  \eqref{eq 16}, $\breve{F}(x+\alpha d)\ominus_{M} \breve{F}(x)\prec \breve{0}$.\\ $  \breve{F}(x+\alpha d)\prec \breve{F}(x) $ follows from Proposition \ref{lm}. Therefore $d$ is the descent direction at $ x $. 
\end{proof}
	
%to the contours  the the depicted region is the set of descent directions at $(x^0,y^0 )$ in XY plane
%
% we tried to show the set of descent directions in XY plane as projection. At the reference point $ (x_0 ,y_0) $, $ Z_1 =\underline{F}(x_0 ,y_0)$ and  $ Z_2 =\overline{F}(x_0 ,y_0)$. Single contour line of $ \underline{F}(x,y) $ with $ Z_1 $ level and $ \overline{F}(x,y) $ with $ Z_2 $ level will pass through $ (x_0 ,y_0) $. Intersection of the open half spaces generated by tangent lines to $ \underline{F}(x,y) $ and $ \overline{F}(x,y) $ at the reference point $ (x_0 ,y_0) $ indicates the set of descent directions satisfying \eqref{d1}.

\begin{figure}[h!]
\begin{center}
%\begin{framed}
%\hspace{5cm}
\includegraphics[scale=0.55]{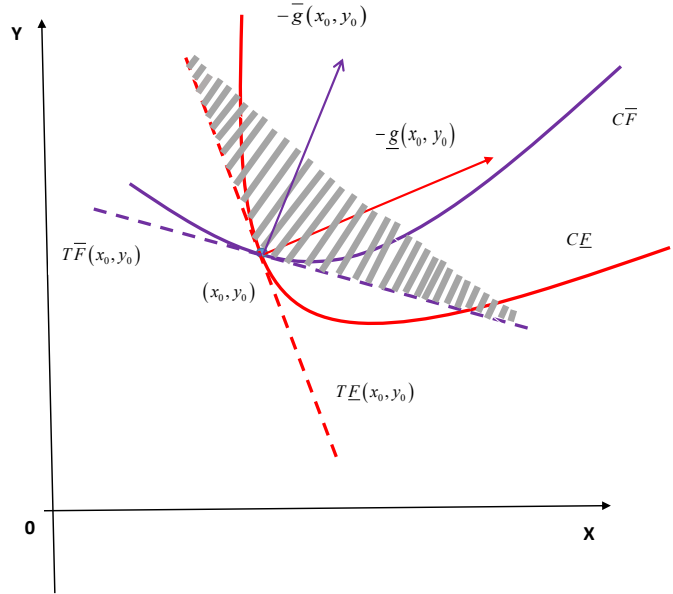}
%\vspace{-2cm}
\caption{Shaded region indicates the set of descent directions}
\label{fig1}
%\end{framed}
\end{center}
\end{figure}
\FloatBarrier
From the Theorem \ref{desdir}, one may conclude that the descent direction of  $\breve{F}(x,y)= [\underline{F}(x,y),\overline{F}(x,y)]     $ at point $(x_0 ,y_0)$  can be determined by solving  $  \sum_{i=1}^n d_i \odot \breve{g}_{i}(x_0 ,y_0)\prec \breve{0} $ i.e. $ \breve{g}(x_0 ,y_0)^{\prime} \odot d\prec \breve{0}      $. The set of descent directions is
\begin{equation*}
	 \left\lbrace d \in \mathbf{R}^{n}: \underline{g}(x_0,y_0)^{\prime}d<0 ; \overline{g}(x_0,y_0)^{\prime}d<0                           \right\rbrace.
\end{equation*}
	In Figure \ref{fig1}, $C\underline{F}$ and $C\overline{F}$ are the contours of lower and upper bound functions at levels $\underline{F}(x_0 ,y_0)$ and  $\overline{F}(x_0 ,y_0)$ respectively. $T\underline{F}$ and $T\overline{F}$ are the tangent lines to the contours $C\underline{F}$ and $C\overline{F}$ at $(x_0 ,y_0)$ respectively. The set of descent directions at $(x_0 ,y_0)$ is the set of vectors in $R^2$, which make obtuse angle with both $\underline{g}(x_0 ,y_0)$ and $\overline{g}(x_0 ,y_0)$. This is the shaded region in Figure \ref{fig1}.
\section{Descent direction and generating descent sequence}
Selection of suitable descent direction plays an important role while developing an efficient numerical algorithm for minimizing a real valued function $f(x)$. Objective of this section is to develop a descent sequence $\breve{F}(x^k)$  with respect to the interval ordering  $\preceq $, staring at an initial point $x^0$.  For the descent direction $d^k\in \mathbf{R}^n$ at $x^k$ of $\breve{F}(x)$, $\breve{F}(x^{k+1})\preceq \breve{F}(x^{k})$ holds causing reduction on lower and upper function simultaneously. Here $x^{k=1}=x^k+\alpha_kd^k$, where $\alpha_k>0$ is the step length at $x^k$ in the descent direction $d^k$, selected in such a way that $\breve{F}(x^{k+1})\preceq \breve{F}(x^{k})$ holds. This process terminates at a point $x^*$, when either $\alpha_k=0$ or no such descent direction exists. We say a point where no such descent direction exists as a critical point of $\breve{F}(x)$, which is defined below.
%
%
%
%
%
%
%
%
%for the interval function $\breve{F}(x)$ which can play an important role in future to develop a suitable line search method for minimizing an interval function.\par
% Since $ K(\mathbf{R}) $ is not a totally ordered space, so minimization of $\breve{F}(x)$ should be understood with respect to the partial orderings in $ K(\mathbf{R}) $. The readers may see \cite{osuna2015optimality} ****ajaya*** for the concept of minimum of interval function(Definition 6), which is stated below. %Since $\breve{F}$ is a set valued mapping so solution of $(\min \hat{P})$ is called as efficient point. \\
%A point $x^{*}\in \mathbf{R}^{n}$ is called an efficient point of $\breve{F}$ if $ \nexists~x\in \mathbf{R}^{n} $ such that $ \breve{F}(x)\preceq \breve{F}(x^{*}) $.
%A point $x^{*}\in \mathbf{R}^{n}$ is called a weak  efficient point of $\breve{F}$ if $ \nexists~x\in \mathbf{R}^{n} $ such that $ \breve{F}(x)\prec \breve{F}(x^{*}) $. Therefore in the light of descent direction, for a weak efficient point $ x^* $, $ \nexists~d \in \mathbf{R}^{n}\mbox{ such that}~~ p^{T}d<0 ~~\forall~p\in  \breve{g}(x^{*})$.
\begin{definition}\label{critic}
A point $ x^* $ is called a critical point of $ \breve{F} $ if  $ \nexists~d \in \mathbf{R}^{n}\mbox{ such that}~~ p^{T}d<0 ~~\forall~p\in  \breve{g}(x^{*})$ .
\end{definition}
\begin{remark} Weak efficient solution of the optimization problem $\min_{x\in R^n} \breve{F}(x)$ is a critical point of $\breve{F}(x)$. For a detailed study of weak efficient solution, the readers may see \cite{bhurjee2012efficient} and \cite{osuna2015optimality}.
\end{remark}
%In this section the descent sequence $\breve{F}(x^k)$  is generated with respect to $\preceq $ ordering of intervals using the concepts developed in previous sections. For the descent direction $d^k\in \mathbf{R}^n$ at $x^k$ of $\breve{F}(x)$, $\breve{F}(x^{k+1})\preceq \breve{F}(x^{k})$ holds causing reduction on lower and upper function simultaneously.
Following results are studied in this direction, which may be considered as a stepping stone for generating descent sequence.
\begin{theorem}\label{ds}
Let $ \breve{F}:\Omega\rightarrow  K(\mathbf{R}) $ be differentiable such that $ 0 \notin int(\breve{g}_{i}(x))~\forall i=1,2, \cdots, n $. Then  $\overline{d}$ is a descent direction at $  x $ where $ \overline{d}_i \in \ominus_{M} \breve{g}_{i}(x) $ for each $ i=1,2, \cdots, n $ .
\end{theorem}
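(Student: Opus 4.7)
The strategy is to reduce the claim to Theorem \ref{desdir}, which already certifies descent whenever $\sum_{i=1}^n \overline{d}_i \odot \breve{g}_i(x) \prec \breve{0}$. So the whole task collapses to verifying this strict negativity for the specific $\overline{d}$ produced by the hypothesis, and no direct appeal to Definition \ref{descent direction} or to the linear expansion from Theorem \ref{mean value theorem} is needed beyond what Theorem \ref{desdir} already packages.

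First I would unwind what $\overline{d}_i$ looks like numerically. By property (ii) of the Markov difference, $\ominus_M \breve{g}_i(x)=[-\overline{g}_i(x),-\underline{g}_i(x)]$, so $\overline{d}_i\in\ominus_M \breve{g}_i(x)$ is just the pair of scalar inequalities $-\overline{g}_i(x)\le \overline{d}_i\le -\underline{g}_i(x)$. Next I would exploit the hypothesis $0\notin int(\breve{g}_i(x))$ to split each coordinate into two cases: (a) $\underline{g}_i(x)\ge 0$, which forces $\overline{d}_i\le 0$; and (b) $\overline{g}_i(x)\le 0$, which forces $\overline{d}_i\ge 0$.

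In each case one writes the scalar-by-interval product $\overline{d}_i \odot \breve{g}_i(x)$ explicitly and checks that both of its endpoints are $\le 0$. For instance, in case (a) the product is $[\overline{d}_i\,\overline{g}_i(x),\,\overline{d}_i\,\underline{g}_i(x)]$, and using $\overline{d}_i\le -\underline{g}_i(x)$ one gets $\overline{d}_i\,\underline{g}_i(x)\le -\underline{g}_i(x)^2\le 0$, while the lower endpoint is even smaller; case (b) is symmetric. Adding these intervals componentwise via $\oplus$ and invoking the definition of $\prec$ yields $\sum_{i=1}^n \overline{d}_i \odot \breve{g}_i(x)\prec \breve{0}$, and Theorem \ref{desdir} closes the argument.

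The main obstacle I anticipate is the boundary behavior when an endpoint of some $\breve{g}_i(x)$ equals zero: there the product $\overline{d}_i \odot \breve{g}_i(x)$ has $0$ as an endpoint and the componentwise sum may only be $\preceq \breve{0}$ instead of strictly $\prec \breve{0}$. To salvage strict descent one either interprets the hypothesis as selecting $\overline{d}_i$ in $int(\ominus_M \breve{g}_i(x))$, or leans on at least one coordinate with $0\notin \breve{g}_i(x)$ to carry the strict sign through the sum. Beyond this subtlety, the entire proof is bookkeeping with the Markov-difference identities recorded after \eqref{difference}.
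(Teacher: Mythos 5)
Your proposal follows essentially the same route as the paper: both arguments use $0\notin int(\breve{g}_i(x))$ to pin down the sign of each $\overline{d}_i\in\ominus_M\breve{g}_i(x)=[-\overline{g}_i(x),-\underline{g}_i(x)]$, conclude $\sum_{i=1}^n \overline{d}_i\odot\breve{g}_i(x)\prec\breve{0}$ termwise, and then invoke Theorem \ref{desdir}. The boundary subtlety you flag --- that when an endpoint of some $\breve{g}_i(x)$ equals zero the product $\overline{d}_i\odot\breve{g}_i(x)$ acquires $0$ as an endpoint and the sum may fail to be strictly $\prec\breve{0}$ --- is a real issue that the paper's own proof silently passes over by simply asserting strict negativity, so your explicit caveat (and proposed fixes) makes your write-up the more careful of the two.
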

\begin{proof}
Since  $  0 \notin int(\breve{g}_{i}(x))\forall i= 1,2, \cdots, n $, either $ \frac{\partial \breve{F}}{\partial x_{i}}\succeq \breve{0} $ or $ \frac{\partial \breve{F}}{\partial x_{i}}\preceq \breve{0}\forall i=1,2, \cdots, n $. \\
Since $ \overline{d}_i \in \ominus_{M} \breve{g}_{i}(x) $ for each $ i=1,2, \cdots, n $, so $ d_{i}\geq  0  $ if $ \frac{\partial \breve{F}}{\partial x_{i}} \preceq  \breve{0}$ and vice versa. \\
So, $ \sum_{i=1}^{n} \overline{d}_i \odot \frac{\partial \breve{F}(x)}{\partial x_{i}} \prec \breve{0} $.\\
Therefore using Theorem \ref{desdir}, $\overline{d}$ is descent direction at $ x $. 
\end{proof}

\begin{remark}\label{dis}
From the above theorem the following results can be concluded.
\begin{itemize}
\item[$\bullet$] For a descent direction $ \overline{d} $, since $\overline{d}_{i}\in\ominus_{M}\breve{g}_{i}(x)=[-\overline{g}_{i}(x), -\underline{g}_{i}(x)] $ for each $ i=1,2, \cdots, n $ , each $ d_i $ can be written as $ d_i = -\overline{g}_{i}(x)+t_i (\overline{g}_{i}(x)-\underline{g}_{i}(x))  $ for each $ t_i \in [0,1] $.
\item[$\bullet$] If $ \exists $ some $ i $, for which $ 0 \in int(\breve{g}_{i}(x)) $, then it may not guarantee that $ \overline{d} $ is a descent direction at $ x $. In that case for preserving descent property one may choose $ \overline{d}_i=0 $ for those $ i $.
\end{itemize}
\end{remark}

\subsection*{Generating a descent sequence of interval function}
For generating descent sequence of intervals $\{\breve{F}(x^k)\}$ satisfying $\breve{F}(x^{k+1})\preceq \breve{F}(x^k)$ and \\ $x^{k+1}=x^k+\alpha_{k}d^k$, selection of $\alpha_{k}$ and suitable stopping condition play important role. The iterative process will be stopped at critical point.  Therefore a tolerance level $ \epsilon > 0 $ can be fixed for $ \parallel d^k \parallel $ as a breaking condition.
Here exact line search technique for real valued functions is used for step-length selection in this iterative process, which is explained below.\\
Suppose
$ \underline{\alpha}_{k}= \arg ~\underset{ \alpha>0} \min \underline{F}(x^{k}+\alpha d^{k}) $ and\\$ \overline{\alpha}_{k}= \arg~ \underset{ \alpha>0} \min \overline{F}(x^{k}+\alpha d^{k}) $ .
Choose
\begin{equation}\label{alp}
\alpha_{k}= \min \lbrace \underline{\alpha}_{k},  \overline{\alpha}_{k}         \rbrace
\end{equation}
 along the descent direction $ \lbrace      d^{k}   \rbrace $.
In particular if $\underline{F}(x^{k}+\alpha d^{k})$ and $ \overline{F}(x^{k}+\alpha d^{k})$ are convex functions in $\alpha $ then $ \underline{\alpha}_{k}= \min \lbrace  \alpha>0 :  \underline{g}(x^{k}+\alpha d^{k})^{\prime}d^{k}=0   \rbrace $ and\\
$ \overline{\alpha}_{(k)}= \min \lbrace  \alpha>0 : \overline{g}(x^{k}+\alpha d^{k})^{\prime}d^{k}=0                 \rbrace $\\
 The above theoretical development is summarized in following steps.
\begin{enumerate}
\item Fix tolerance level $ \epsilon $, initialize $ k=0 $, $ x^0 $, $ \alpha_0 $, fix $ t _i \in$ rand $[0,1] $.
\item Choose $ \overline{d}_i (t)$ according to Remark \ref{dis}.
\item Select suitable step length $ \alpha_k $ according to \eqref{alp}.
\item $ x^{k+1}= x^{k}+ \alpha_k \overline{d}^{k} $.
\item Check if $\parallel \overline{d}^{k+1} \parallel  < \epsilon $, stop. Otherwise  $ k :=k+1 $.
\end{enumerate}

The above steps can be verified with the following example. In this example a descent sequence is generated and critical point is verified.
\begin{ex} \label{num2}
 Consider \\ $ \breve{F}(x_{1},x_{2})=[2,4]x_{1}^{2} \oplus [2,3]x_{1}x_{2} \oplus [1,2]x_{2}^{2}\oplus [1,2]x_{1}\ominus_{M}[1,3]x_{2} $. \\ \\
 
 $\breve{F}(x_{1},x_{2})= \begin{cases}
 \left[ 2x_1^2 +2x_1x_2+x_2^2+x_1-x_2 \right. \vee \\ \left. 4x_1^2 +3x_1x_2+2x_2^2+2x_1-3x_2\right];\\ \text { if } x_{1} \geq 0,x_{2}\geq 0\\
 \left[ 2x_1^2 +3x_1x_2+x_2^2+2x_1-x_2 \right. \vee \\ \left. 4x_1^2 +2x_1x_2+2x_2^2+x_1-3x_2\right];\\ \text { if } x_{1} \leq 0,x_{2}\geq 0\\
  \left[ 2x_1^2 +2x_1x_2+x_2^2+2x_1-3x_2 \right. \vee \\ \left. 4x_1^2 +3x_1x_2+2x_2^2+x_1-x_2\right];\\ \text { if } x_{1} \leq 0,x_{2}\leq 0\\
  \left[ 2x_1^2 +3x_1x_2+x_2^2+x_1-3x_2 \right. \vee \\ \left. 4x_1^2 +2x_1x_2+2x_2^2+2x_1-x_2\right];\\ \text { if } x_{1} \geq 0,x_{2}\geq 0
 \end{cases} $
 \\ \\
 $\nabla \breve{F}(x_{1}, x_{2})=\begin{cases}
 \begin{pmatrix}
 [4x_{1}+2x_{2}+1 \vee 8x_{1}+3x_{2}+2] \\ [2x_{1}+2x_{2}-1 \vee 3x_{1}+4x_{2}-3]
 \end{pmatrix};\\ \text { if } x_{1} \geq 0,x_{2}\geq 0\\
 \begin{pmatrix}
 [  4x_{1}+3x_{2}+2 \vee  8x_{1}+2x_{2}+1] \\ [3x_{1}+2x_{2}-1 \vee  2x_{1}+4x_{2}-3]
 \end{pmatrix};\\ \text { if } x_{1} \leq 0,x_{2}\geq 0\\
 \begin{pmatrix}
 [ 4x_{1}+2x_{2}+2 \vee 8x_{1}+3x_{2}+1] \\ [ 2x_{1}+2x_{2}-3 \vee   3x_{1}+4x_{2}-1 ]
 \end{pmatrix};\\ \text { if } x_{1} \leq 0,x_{2}\leq 0\\
 \begin{pmatrix}
 [4x_{1}+3x_{2}+1 \vee 8x_{1}+2x_{2}+2] \\ [ 3x_{1}+2x_{2}-3 \vee    2x_{1}+4x_{2}-1]
 \end{pmatrix}; \\ \text { if } x_{1} \geq 0,x_{2}\leq 0\\
 \end{cases}
 $\\ \\
Selection of descent direction at $ (1,-1) $.\\
Here $ \nabla \breve{F}(1,-1)= \begin{pmatrix}
[2,8] & [-3, -2]
\end{pmatrix} $.\\
$ \overline{d_1}= -8+6t_1 $, $ \overline{d_2}= 2+t_2 $, with $ t_1,t_2 \in [0,1] $. Choose $ t_1=\dfrac{5}{6} $ and $ t_2=0 $.
Therefore $ \overline{d} = \begin{pmatrix}
-3\\2
\end{pmatrix}$. Then $ \sum_{i=1}^2 \overline{d}_i \odot \breve{g}_i (1.-1)= [-30,-10] \prec \breve{0}$.
From Theorem \ref{desdir}, $\overline{ d} $ is a descent direction at $(1,-1)$.\\\\
In this example a descent sequence of intervals $\{\breve{F}(x^k)\}$ is generated starting with randomly chosen initial point.  At every iteration, $\alpha_k$ is selected according to \eqref{alp}. $\overline{d}^{k}$ is determined using Remark \ref{dis}. All the iterations are summarized in the following table.
\begin{table*}[!t]
		\caption{Iterations with $ x^0=(1,1)' $ }\label{tab1}
		%\scriptsize
		\begin{center}
		\begin{tabular}{|c|c|c|c|c|}
			\hline
			$k$ & $x^{k}$ & $ \breve{F}(x^{k}) $  & $ \overline{d}^{k} $ & $ \alpha_{k} $\\
			\hline
$0$ & $(1.00000, 1.00000)$ & $[5.00000, 8.00000]$  & $(-13.00000, -4.00000)$& $0.10703$\\ \hline
$1$ & $(-0.39141, 0.57188)$ & $[-1.90307, -0.77751]$  & $(-0.58438, 1.88672)$& $0.37676$\\ \hline
$2$ & $(-0.61157, 1.28271)$ & $[-2.63791, -1.06984]$  & $(-1.11913, -0.34227)$& $0.09160$\\ \hline
$3$ & $(-0.71409, 1.25136)$ & $[-2.69151,-1.16687]$  & $(-0.64637, -0.00000)$ & $0.00801$ \\ \hline
$4$ & $(-0.71926, 1.25136)$ & $[-2.69162,-1.17016]$  & $(-0.00000, -0.00000)$ & $ - $ \\ \hline
\end{tabular}
\end{center}
\end{table*}
From the Table \ref{tab1}, one may observe that $\breve{F}(x^{4})\preceq \breve{F}(x^{3})\preceq \breve{F}(x^{2})\preceq \breve{F}(x^{1})\preceq \breve{F}(x^{0})$. The iterative process terminates at $ x^4 $ since $ \parallel \overline{d}^k \parallel< \epsilon $ where $ \epsilon = 10^{-5} $. \\\\
\textbf{Justification of critical point:}\\
To justify $ x^4=(-0.71926, 1.25136) $, generated in Table \ref{tab1} as a critical point, from Definition \ref{critic}, it is enough to show that $ \sum_{i=1}^2 d_i \odot \breve{g}_{i}(x^4) \prec \breve{0} $ has no solution for any non zero $ d \in \mathbf{R}^2 $.
$\breve{g}(x^4)=\begin{pmatrix}
[-2.25136,2.87704] &[-0.65506, 0.56692]
\end{pmatrix}$. Therefore the interval inequality will be $$ d_1 \odot [-2.25136,2.87704] \oplus d_2 \odot [-0.65506, 0.56692]\prec \breve{0}.   $$
The above inequality can be reduced to two different system of inequalities (in real form) for different signs of $ d_1 $ and $ d_2 $.\\
\textbf{Case 1:} $ d_1 $ and $ d_2 $ are of same sign.\\
Therefore either $ d_ 1 \geq 0$, $ d_2 \geq 0 $  or $ d_1 \leq 0 $, $ d_2 \leq 0 $ with $ (d_1, d_2)\neq (0,0) $. In that case the above interval inequality reduces to
the following system of inequalities.
\begin{align*}
-2.25136 d_1  -0.65506 d_2 <0\\
2.87704 d_1+0.56692 d_2 <0
\end{align*}
For $ d_ 1 \geq 0$, $ d_2 \geq 0 $, the first inequality holds but the second one doesn't hold. \\
The second inequality satisfies the condition $ d_1 \leq 0 $, $ d_2 \leq 0$ but the first one does not.
Thus this system has no solution. \\ 

\textbf{Case 2:} $ d_1 $ and $ d_2 $ are of opposite sign.\\
Therefore either $ d_ 1 \geq 0$, $ d_2 \leq 0 $  or $ d_1 \leq 0 $, $ d_2 \geq 0 $ with $ (d_1, d_2)\neq (0,0) $. In that case the above interval inequality reduces to
the following system of inequalities.
\begin{align*}
2.87704 d_1  -0.65506 d_2 <0  \\ 
-2.25136 d_1+0.56692 d_2 <0 
\end{align*}
Here the first inequality holds for $ d_1 \leq 0 $, $ d_2 \geq 0 $ but the second inequality doesn't hold for the same condition. Similarly in case of $ d_ 1 \geq 0$, $ d_2 \leq 0 $, second inequality holds but the second inequality cannot hold. Hence the above system has no solution.\\ \\
From the above cases we can conclude that $ \nexists$ non zero $d \in \mathbf{R}^2 $ such that $ \sum_{i=1}^2 d_i \odot \breve{g}_{i}(x^4) \prec \breve{0} $. Hence  $ x^4 $ is a critical point of $ \breve{F} $.\\

Since $\breve{F}(x)$ is a set valued mapping so critical point of $\breve{F}(x)$ is not unique. Starting with different initial points one may have different critical points.
\end{ex}

\section{Conclusion and future scope}
The set of intervals is partially ordered. So it is not always easy to create a descent sequence for any general interval valued function. Objective of this article is to develop an iterative process to construct a descent sequence of intervals which provides a critical point of a function with bounded parameters as intervals. Some natural questions raise after the theoretical discussions of this article.
 Though $\{\breve{F}(x^k)\}$ is a descent sequence of intervals,  convergence of $\{x^k\}$ to the critical point can be guaranteed only with several assumptions on $\breve{F}$. This part is not studied here and kept for future research. The article focuses only on the iterative process.  Exact line search technique is used to decide the step length. Determination of suitable step length satisfying conditions \eqref{alp} is cumbersome for complex functions. There are several inexact line search methods for selection of step length, which may be used to justify the convergence, which remains the scope of the present contribution.

\bibliographystyle{plainnat}
\bibliography{ph4}
\end{document}